\newcommand{\tpmod}[1]{{\@displayfalse\pmod{#1}}}
\newcommand{\ord}{\operatorname{ord}}
\newtheorem{thm}{Theorem}[section]
\newtheorem{lemma}[thm]{Lemma}
\newtheorem{prop}[thm]{Proposition}
\newtheorem{cor}[thm]{Corollary}
\theoremstyle{remark}
\theoremstyle{definition}
    \newtheorem{defn}[thm]{Definition}
\newtheorem{rem}[thm]{Remark}
\def\Z {{\mathbb Z}}
\def\NN {{\mathcal N}}
\def\Q {{\mathbb Q}}
\def\C {{\mathcal C}}
\def\S {{{\mathcal S}}}
\def\D {{\mathcal D}}
\def\F {{\mathbb F}}
\def\D {{\mathcal D}}
\def\Z {{\mathbb Z}}
\def\Q {{\mathbb Q}}
\def\C {{\mathbb C}}
\def\S {{{\mathcal S}}}
\def\CC {{\mathcal C}}
\def\red#1 {\textcolor{red}{#1 }}
\def\blue#1 {\textcolor{blue}{#1 }}
\numberwithin{equation}{section}
\def\Z {{\mathbb Z}}
\begin{document}

\title[Monogenicity of power-compositional Shanks polynomials]{On the monogenicity of power-compositional Shanks polynomials}


\author{Lenny Jones}
\address{Professor Emeritus, Department of Mathematics, Shippensburg University, Shippensburg, Pennsylvania 17257, USA}
\email[Lenny~Jones]{doctorlennyjones@gmail.com}

\date{\today}

\begin{abstract}
 Let $f(x)\in {\mathbb Z}[x]$ be a monic polynomial of degree $N$ that is irreducible over ${\mathbb Q}$. We say $f(x)$ is \emph{monogenic} if  $\Theta=\{1,\theta,\theta^2,\ldots ,\theta^{N-1}\}$ is a basis for the ring of integers ${\mathbb Z}_K$ of $K={\mathbb Q}(\theta)$, where $f(\theta)=0$. If $\Theta$ is not a basis for ${\mathbb Z}_K$, we say that $f(x)$ is \emph{non-monogenic}.

Let $k\ge 1$ be an integer, and let $(U_n)$
be the sequence defined by
\[U_0=U_1=0,\quad U_2=1 \quad \mbox{and}\quad U_n=kU_{n-1}+(k+3)U_{n-2}+U_{n-3} \quad \mbox{for $n\ge 3$}.\]
It is well known that $(U_n)$ is periodic modulo any integer $m\ge 2$, and we let $\pi(m)$
denote the length of this period. We define a \emph{$k$-Shanks prime} to be a prime $p$ such that $\pi(p^2)=\pi(p)$.

  Let ${\mathcal S}_k(x)=x^{3}-kx^{2}-(k+3)x-1$  and 
  \begin{equation*}
  {\mathcal D}=\left\{\begin{array}{cl}
  (k/3)^2+k/3+1 & \mbox{if $k\equiv 0 \pmod{3}$,}\\
  k^2+3k+9 & \mbox{otherwise.}
\end{array} \right.
\end{equation*}
Suppose that $k\not \equiv 3 \pmod{9}$ and that ${\mathcal D}$ is squarefree. 
In this article, we prove that $p$ is a $k$-Shanks prime if and only if ${\mathcal S}_k(x^p)$ is non-monogenic, for any prime $p$ such that
 ${\mathcal S}_k(x)$ is irreducible in ${\mathbb F}_p[x]$. Furthermore, we show that ${\mathcal S}_k(x^p)$ is monogenic for any prime divisor $p$ of $k^2+3k+9$. These results extend previous work of the author on $k$-Wall-Sun-Sun primes.
   \end{abstract}

\subjclass[2020]{Primary 11R04, 11B39, Secondary 11R09, 12F05}
\keywords{simplest cubic fields, Shanks, monogenic} 

\maketitle
\section{Introduction}\label{Section:Intro}
For a monic polynomial $f(x)\in {\mathbb Z}[x]$ of degree $N$ that is irreducible over ${\mathbb Q}$, we say that $f(x)$ is \emph{monogenic} if  $\Theta=\{1,\theta,\theta^2,\ldots ,\theta^{N-1}\}$ is a basis for the ring of integers ${\mathbb Z}_K$ of $K={\mathbb Q}(\theta)$, where $f(\theta)=0$. Such a basis $\Theta$ is called a \emph{power basis}. If $\Theta$ is not a basis for ${\mathbb Z}_K$, we say that $f(x)$ is \emph{non-monogenic}. Since \cite{Cohen}
\begin{equation} \label{Eq:Dis-Dis}
\Delta(f)=\left[\Z_K:\Z[\theta]\right]^2\Delta(K),
\end{equation}
 where $\Delta(f)$ and $\Delta(K)$ denote, respectively, the discriminants over $\Q$ of $f(x)$ and $K$, we see that $f(x)$ is monogenic if and only if $\left[\Z_K:\Z[\theta]\right]=1$ or, equivalently, $\Delta(f)=\Delta(K)$.

 We also say that the number field $K$ is \emph{monogenic} if there exists some power basis for $\Z_K$. Consequently, while sufficient, the monogenicity\footnote{Although the terms \emph{monogenity} and \emph{monogeneity} are more common in the literature, we have chosen to use the more grammatically-correct term \emph{monogenicity}.} of $f(x)$ is not a necessary condition for the monogenicity of $K$. That is, there could exist $\alpha\in K$ and $g(x)\in \Z[x]$, with $g(x)\ne f(x)$, $\alpha\ne \theta$ and $g(\alpha)=0$, such that $g(x)$ is monogenic.

Let $k\ge 1$ be an integer, and let $(U_n)$
be the sequence defined by
\begin{align}\label{Eq:Shanks}
\begin{split}
U_0&=0,\quad U_1=0,\quad U_2=1 \quad\mbox{and}\\
 \quad U_n&=kU_{n-1}+(k+3)U_{n-2}+U_{n-3} \quad \mbox{for $n\ge 3$}.
 \end{split}
\end{align}
It is well known that $(U_n)$ is periodic modulo any integer $m\ge 2$, and we let $\pi(m)$
denote the length of this period. Note that $\pi(m)\ge 3$.
The following definition extends the idea of a $k$-Wall-Sun-Sun prime in \cite{JonesActa} to the sequence $(U_n)$ in \eqref{Eq:Shanks}.
\begin{defn}\label{Def:kS prime}
A prime $p$ is a \emph{$k$-Shanks prime} if $\pi(p^2)=\pi(p)$.
\end{defn}
\noindent Table \ref{T:1} gives some examples of $k$-Shanks primes $p$ and their period lengths $\pi(p)$.
\begin{table}[h]
 \begin{center}
\begin{tabular}{ccc}
 $k$ & $p$ & $\pi(p^2)=\pi(p)$\\ \hline \\[-8pt]
33 & 17 & 307\\ [2pt]
95 & 13 & 183\\ [2pt]
409 & 61 & 3783\\ [2pt]
618 & 43 & 1893\\ [2pt]
987 & 101 & 10303
 \end{tabular}
\end{center}
\caption{$k$-Shanks primes $p$ and corresponding period lengths}
 \label{T:1}
\end{table}

 Note that the characteristic polynomial of $(U_n)$ is the polynomial
 \begin{equation}\label{Eq:Shankspoly}
 \S_k(x)=x^3-kx^2-(k+3)x-1,
 \end{equation}
 which we call a \emph{Shanks} polynomial, after Daniel Shanks who investigated $\S_k(x)$ \cite{Shanks}. Since $\S_k(1)=-2k-1\ne 0$ and $\S_k(-1)=-1\ne 0$, it follows from the Rational Root Theorem that $\S_k(x)$ is irreducible over $\Q$. Shanks called the cyclic cubic fields $\Q(\rho)$, where $\S_k(\rho)=0$, the ``simplest cubic fields". He observed many algebraic similarities between the cubic fields $\Q(\rho)$ when $k^2+3k+9$ is prime, and the quadratic fields $\Q(\alpha)$ when $k^2+4$ is prime, where $\alpha^2-k\alpha-1=0$. A straightforward calculation reveals that
 \begin{equation}\label{Eq:Delta(S)}
 \Delta(\S_k)=(k^2+3k+9)^2,
  \end{equation} and it is easy to see from \eqref{Eq:Dis-Dis}, as Shanks pointed out, that $\S_k(x)$ is monogenic when $k^2+3k+9$ is prime. We also see quite easily that $f(x)=x^2-kx-1$ is monogenic when $k^2+4$ is prime. It turns out that $\S_k(x)$ is monogenic in a more general setting (see Theorem \ref{Thm:Main}), as is $f(x)$ \cite{JonesActa}.

  We point out that Gras (in her PhD thesis) and Gras, Moser and Payan \cite{Gras3} also studied cyclic cubic fields around the same time as Shanks, and although there is considerable overlap in \cite{Shanks} and \cite{Gras3}, the methods used by Gras, Moser and Payan are different from Shanks.
  While a main focus in both \cite{Shanks} and \cite{Gras3} was the determination of the class numbers and class groups of the cyclic cubic fields, we are concerned here with the  monogenicity of the power-compositional polynomials $\S_k(x^p)$, where $p$ is a prime. In particular, we establish an intimate connection between the monogenicity of $\S_k(x^p)$ and whether $p$ is a $k$-Shanks prime, in the case when $\S_k(x)$ is irreducible in $\F_p[x]$. Furthermore, we show that $\S_k(x^p)$ is always monogenic when $p$ is a prime divisor of $k^2+3k+9$.
   To accomplish these tasks, we exploit some of the algebraic similarities between $\Q(\rho)$ and $\Q(\alpha)$, as noted by Shanks, to extend many of the ideas found in \cite{JonesActa} to the Shanks polynomials. More precisely, we prove
  \begin{thm}\label{Thm:Main}
 Let $k\ge 1$ be an integer such that $k\not \equiv 3 \pmod{9}$ and $\D$ is squarefree, where
  \begin{equation}\label{Eq:DD}
  {\mathcal D}:=\left\{\begin{array}{cl}
  (k/3)^2+k/3+1 & \mbox{if $k\equiv 0 \pmod{3}$,}\\
  k^2+3k+9 & \mbox{otherwise.}
\end{array} \right.
\end{equation}
 Let $\S_k(x)$ be as defined in \eqref{Eq:Shankspoly} and let $p$ be a prime. 
  \begin{enumerate}
 \item \label{Main:I1} The polynomial $\S_k(x)$ is monogenic.
  \item \label{Main:I2} If $\S_k(x)$ is irreducible in $\F_p[x]$, then $p$ is a $k$-Shanks prime if and only if $\S_k(x^p)$ is non-monogenic.
  \item \label{Main:I3} If $k^2+3k+9\equiv 0 \pmod{p}$,
 then $\S_k(x^p)$ is monogenic.
 \end{enumerate}
 \end{thm}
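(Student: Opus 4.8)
The plan is to attack the three parts using the standard Dedekind-criterion machinery adapted to power-compositional polynomials, together with explicit control of the discriminant $\Delta(\S_k(x^p)) = \pm\, p^{3p}\,(k^2+3k+9)^{2p}\cdot(\text{small factor})$ obtained from the resultant formula for composed polynomials. For part \eqref{Main:I1}, I would first show $\Delta(\S_k) = (k^2+3k+9)^2$ (stated in \eqref{Eq:Delta(S)}), and then argue that under the hypothesis that $\D$ is squarefree and $k\not\equiv 3\pmod 9$, the index $[\Z_K:\Z[\rho]]$ is $1$. The only primes that can divide the index are those dividing $k^2+3k+9$; since $k^2+3k+9 = 9\D$ when $3\mid k$ (roughly) and is otherwise $\D$ itself, the squarefreeness of $\D$ handles all primes $q\ne 3$ via the fact that a prime dividing the index must divide $\Delta(f)$ to an even power $\ge 2$, while $q\,\|\,\D$ forces $q^2\,\|\,\Delta(\S_k)$ exactly. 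The prime $3$ needs separate treatment: apply the Dedekind criterion at $3$ directly to $\S_k(x)$, using the condition $k\not\equiv 3\pmod 9$ to rule out $3$ dividing the index (the case $3\mid k$ versus $3\nmid k$ split naturally, and the excluded residue $k\equiv 3\pmod 9$ is precisely where Dedekind fails at $3$).

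For part \eqref{Main:I2}, assume $\S_k(x)$ is irreducible over $\F_p$. Then $p\nmid \Delta(\S_k)$, hence $p\nmid k^2+3k+9$, so $p$ is unramified in $K=\Q(\rho)$ and $\Z[\rho]$ is $p$-maximal. Let $L = \Q(\theta)$ where $\S_k(\theta^p)=0$; then $L/K$ is a degree-$p$ extension. The key is that the index $[\Z_L : \Z[\theta]]$ is a $p$-power-times-(divisor of $k^2+3k+9$), and by part \eqref{Main:I1}-type reasoning the part coprime to $p$ contributes nothing, so $\S_k(x^p)$ is monogenic if and only if $p\nmid [\Z_L:\Z[\theta]]$. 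I would then apply Dedekind's criterion at $p$ to $\S_k(x^p)$: since $\S_k(x)$ is irreducible mod $p$, we have $\S_k(x^p)\equiv \S_k(x)^p\cdot(\cdots)$-type factorization is not quite right — rather, $\S_k(x^p) \bmod p$ factors according to the Frobenius, and the repeated-factor structure comes from $x^p \equiv $ a $p$-th power. The Dedekind test at $p$ then reduces to checking whether $\S_k(x)$ divides an explicit auxiliary polynomial mod $p$, and this divisibility condition should be shown equivalent to the congruence $U_{\pi(p)+1}\equiv U_1 \pmod{p^2}$-type statement characterizing $k$-Shanks primes — concretely, to the statement that the companion matrix of $\S_k$ raised to the power $\pi(p)$ is congruent to the identity mod $p^2$, which is exactly $\pi(p^2)=\pi(p)$.

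For part \eqref{Main:I3}, suppose $p\mid k^2+3k+9$. Now $p$ ramifies in $K$, and $\S_k(x)$ is \emph{not} irreducible mod $p$ (it has a repeated root), so part \eqref{Main:I2} does not apply; I would instead work directly. The discriminant $\Delta(\S_k(x^p))$ is divisible by $p$ to a controlled power, and I would apply the Dedekind criterion at $p$ to $\S_k(x^p)$, factoring $\S_k(x^p)\bmod p$ explicitly using the repeated-root structure of $\S_k\bmod p$ (here the hypotheses $k\not\equiv 3\pmod 9$ and $\D$ squarefree ensure $p\,\|\,(k^2+3k+9)$ exactly when $p\ne 3$, and pin down the $p=3$ behaviour). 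The factorization mod $p$ will have the form $(x - r)^{a}\cdot(\text{other factors})^{b}$ with the exponents divisible by $p$, and the Dedekind condition becomes a non-vanishing statement about a derivative/auxiliary polynomial at the ramified prime, which can be verified by a direct computation exploiting that $p$ exactly divides $k^2+3k+9$.

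The main obstacle I anticipate is part \eqref{Main:I2}: translating the Dedekind-criterion divisibility condition at $p$ for $\S_k(x^p)$ into the dynamical statement $\pi(p^2)=\pi(p)$. This requires expressing the relevant auxiliary polynomial (the quotient $\bigl(\S_k(x^p) - \text{lift of its mod-}p\text{ factorization}\bigr)/p$ evaluated appropriately) in terms of the recurrence $(U_n)$, and recognizing that $\S_k$ dividing it mod $p$ is equivalent to the companion matrix of $\S_k$ being $\equiv I \pmod{p^2}$ after $\pi(p)$ steps. The bridge is the identity that the order of the companion matrix mod $p^j$ equals $\pi(p^j)$, combined with a lifting-the-exponent style analysis of how this order grows from $p$ to $p^2$; making this rigorous, rather than just plausible, is where the real work lies, and it parallels the $k$-Wall-Sun-Sun argument in \cite{JonesActa} that the author is extending.
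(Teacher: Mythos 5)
Your overall plan (Dedekind's criterion at the primes dividing the discriminant, a discriminant argument reducing the monogenicity of $\S_k(x^p)$ to the single prime $p$, and a companion-matrix translation of the Dedekind condition into $\pi(p^2)=\pi(p)$) is the same as the paper's, but there are concrete gaps. The most serious is in part \ref{Main:I1}: your treatment of the primes $q>3$ dividing $k^2+3k+9$ does not work as stated. From $\Delta(\S_k)=[\Z_K:\Z[\rho]]^2\Delta(K)$ and the fact that $q^2$ exactly divides $\Delta(\S_k)$, one can only conclude that either $q\nmid[\Z_K:\Z[\rho]]$, or $q$ exactly divides the index and $q\nmid\Delta(K)$; the second alternative is not absurd on its face. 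Dedekind's classical example $x^3-x^2-2x-8$ has $\Delta(f)=-2^2\cdot 503$, with $2$ dividing $\Delta(f)$ to exactly the second power and the index equal to $2$, so the principle you invoke is false in general. To exclude the second alternative here you would need to know independently that $q$ ramifies in $K$, and you cannot read that off from the cube factorization of $\S_k$ modulo $q$, since that inference already presupposes $q\nmid[\Z_K:\Z[\rho]]$. The paper avoids this circularity by running Dedekind's criterion honestly at every $q\mid k^2+3k+9$: for $q>3$ it exhibits the triple root $k/3$ of $\S_k$ modulo $q$, takes $g(x)=x-(k-jq)/3$ with $k\equiv j\pmod{3}$, and shows $qF\left((k-jq)/3\right)\not\equiv 0\pmod{q^2}$ using the squarefreeness of $\D$ together with $2k+3\not\equiv 0\pmod{q}$.

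Two further gaps: first, you never establish that $\S_k(x^p)$ is irreducible over $\Q$, without which its monogenicity is not even defined. This is not automatic; the paper proves it (Lemma \ref{Lem:Main0}) via Capelli's theorems, reducing to the statement that the unit $\rho$ is not a $p$-th power in $\Q(\rho)$, which relies on Shanks's result that $\rho$ and $\rho+1$ are independent fundamental units. Second, the heart of part \ref{Main:I2} --- the equivalence between the Dedekind divisibility condition at $p$ and $\pi(p^2)=\pi(p)$ --- is exactly the step you defer as ``where the real work lies,'' so the proposal does not yet contain a proof of it. The paper's route is short: with $g=\S_k$ and $h=\S_k^{p-1}$ one gets $pF(\rho)\equiv -\S_k(\rho^p)\pmod{p^2}$, so $p$ divides the index if and only if $\S_k(\rho^p)\equiv 0\pmod{p^2}$, i.e., if and only if $\rho^p$ is congruent modulo $p^2$ to a conjugate of $\rho$; combining $\ord_m(\rho)=\pi(m)$ (Robinson), $p^2+p+1\equiv 0\pmod{\pi(p)}$, and $\pi(p^2)\in\{\pi(p),p\pi(p)\}$ (Renault) then yields $\pi(p^2)=\pi(p)$. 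Your sketch of part \ref{Main:I3} is consistent with the paper, though the mod-$p$ factorization is the single factor $(x-\alpha)^{3p}$ with $\alpha=-(k+3)/k$, and the decisive computation is that $\alpha^{3p}\equiv 1\pmod{p^2}$ forces $\alpha^p\equiv\alpha\pmod{p^2}$, whence $\S_k(\alpha^p)\equiv 0\pmod{p^2}$ would contradict the squarefreeness of $\D$.
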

\begin{rem}
  Although the monogenicity of $\S_k(x)$ follows from \cite{KS}, 
   we nevertheless provide a proof
  for the sake of completeness. See also \cite{Gras1,Gras2}.
\end{rem}
\section{Preliminaries}\label{Section:Prelim}
Throughout this article, we assume that
\begin{itemize}
\item $k\ge 1$ is an integer such that $k\not \equiv 3 \pmod{9}$,
\item $\D$, as defined in \eqref{Eq:DD}, is squarefree,
  \item $p$ and $q$ denote primes,
  \item $\S_k(x)$ is as defined in \eqref{Eq:Shankspoly},
   \item $\ord_m(*)$ denote the order of $*$ modulo the integer $m\ge 2$.
   \end{itemize}

The following theorem, known as \emph{Dedekind's Index Criterion}, or simply \emph{Dedekind's Criterion} if the context is clear, is a standard tool used in determining the monogenicity of a monic polynomial that is irreducible over $\Q$.
\begin{thm}[Dedekind \cite{Cohen}]\label{Thm:Dedekind}
Let $K=\Q(\theta)$ be a number field, $T(x)\in \Z[x]$ the monic minimal polynomial of $\theta$, and $\Z_K$ the ring of integers of $K$. Let $q$ be a prime number and let $\overline{ * }$ denote reduction of $*$ modulo $q$ (in $\Z$, $\Z[x]$ or $\Z[\theta]$). Let
\[\overline{T}(x)=\prod_{i}\overline{t_i}(x)^{e_i}\]
be the factorization of $T(x)$ modulo $q$ in $\F_q[x]$, and set
\[g(x)=\prod_{i}t_i(x),\]
where the $t_i(x)\in \Z[x]$ are arbitrary monic lifts of the $\overline{t_i}(x)$. Let $h(x)\in \Z[x]$ be a monic lift of $\overline{T}(x)/\overline{g}(x)$ and set
\[F(x)=\dfrac{g(x)h(x)-T(x)}{q}\in \Z[x].\]
Then
\[\left[\Z_K:\Z[\theta]\right]\not \equiv 0 \pmod{q} \Longleftrightarrow \gcd\left(\overline{F},\overline{g},\overline{h}\right)=1 \mbox{ in } \F_q[x].\]
\end{thm}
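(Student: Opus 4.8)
The plan is to prove the criterion by localizing the order $\O:=\Z[\theta]$ at the prime $q$, thereby reducing the global index condition to a local maximality question. First I would record the elementary fact that, since $\Z_K/\Z[\theta]$ is a finite abelian group, one has $q\mid[\Z_K:\Z[\theta]]$ if and only if $\Z[\theta]$ fails to be $q$-maximal, i.e. $\Z[\theta]_{(q)}\neq(\Z_K)_{(q)}$; equivalently, there exists $\beta\in\Z_K\setminus\Z[\theta]$ with $q\beta\in\Z[\theta]$. Thus the statement is entirely local at $q$, and I may work over the DVR $R=\Z_{(q)}$ with residue field $\F_q$, where $\Z[\theta]_{(q)}=R[x]/(T)$.

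Second, I would introduce the $q$-radical $\mathfrak{r}=\sqrt{q\O}$ and identify it explicitly. Using the factorization $\overline T=\prod_i\overline{t_i}^{\,e_i}$ together with the Chinese Remainder Theorem isomorphism $\O/q\O\cong\F_q[x]/(\overline T)\cong\prod_i\F_q[x]/(\overline{t_i}^{\,e_i})$, one checks that $\mathfrak{r}=(q,g(\theta))$, since $\overline g=\prod_i\overline{t_i}$ is exactly the radical of $\overline T$. The decisive structural input is the maximality criterion for orders: $\O$ is $q$-maximal if and only if $\O$ equals the idealizer $(\mathfrak{r}:\mathfrak{r})=\{x\in K:x\mathfrak{r}\subseteq\mathfrak{r}\}$. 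One inclusion is easy, because $(\mathfrak{r}:\mathfrak{r})$ is an order between $\O$ and $\Z_K$ whose quotient by $\O$ is $q$-torsion, so if it strictly contains $\O$ then $q\mid[\Z_K:\O]$; the converse, that non-maximality always produces a strictly larger idealizer, is the standard one-dimensional commutative-algebra fact underlying the Round~2 algorithm, which I would invoke.

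Third comes the explicit computation of $(\mathfrak{r}:\mathfrak{r})$. Since $q\in\mathfrak{r}$ one gets $\O\subseteq(\mathfrak{r}:\mathfrak{r})\subseteq\tfrac1q\mathfrak{r}=\O+\tfrac{g(\theta)}{q}\O$, so every candidate new element has the form $x\equiv\tfrac{g(\theta)b(\theta)}{q}\pmod{\O}$ for some $b\in\Z[x]$. Testing $x\mathfrak{r}\subseteq\mathfrak{r}$ on the generators $q$ and $g(\theta)$, the condition on $q$ is automatic and the condition on $g(\theta)$ reduces to $\tfrac{g(\theta)^2b(\theta)}{q}\in\mathfrak{r}$. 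Here I would substitute the defining relation $g(\theta)h(\theta)=qF(\theta)$, which holds because $g(x)h(x)-T(x)=qF(x)$ and $T(\theta)=0$, and reduce modulo $q$ inside $\prod_i\F_q[x]/(\overline{t_i}^{\,e_i})$. The factor $\overline{t_i}$ divides $\overline h$ precisely when $e_i\geq2$, and carrying the expansion of $g(\theta)^2b(\theta)$ to order $q^2$ turns the membership into the divisibility $\overline{t_i}\mid\overline F$ on exactly those components. Consequently a nontrivial element $x\in(\mathfrak{r}:\mathfrak{r})\setminus\O$ exists if and only if some $\overline{t_i}$ with $e_i\geq2$ also divides $\overline F$, i.e. if and only if $\gcd(\overline F,\overline g,\overline h)\neq1$, since $\gcd(\overline g,\overline h)=\prod_{e_i\geq2}\overline{t_i}$. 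Combining this with the maximality criterion yields the stated equivalence.

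I expect the main obstacle to be this last computation: correctly tracking $g(\theta)^2b(\theta)$ modulo $q^2$ rather than merely modulo $q$, so as to extract the residual factor $\overline F$, and verifying that the relation $g(\theta)h(\theta)=qF(\theta)$ indeed converts ideal membership in $\mathfrak{r}$ into the clean divisibility $\overline{t_i}\mid\overline F$ on each ramified component. A secondary, more conceptual point requiring care is the converse half of the maximality criterion, namely that an order failing to be integrally closed at $q$ always admits the radical idealizer as a strictly larger order; this is where the one-dimensionality of $\O_{(q)}$ is essential.
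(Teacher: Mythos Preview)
The paper does not prove this theorem at all: Theorem~\ref{Thm:Dedekind} is simply stated with attribution to Dedekind and a reference to Cohen's textbook, and is then used as a black box in the proofs of the main results. So there is no ``paper's own proof'' to compare against.

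That said, your proposal is a reasonable outline of the standard proof one finds in Cohen (the Round~2 idealizer argument): localize at $q$, identify the $q$-radical of $\O=\Z[\theta]$ as $\mathfrak r=(q,g(\theta))$, invoke the criterion that a one-dimensional Noetherian domain is maximal at $q$ if and only if $\O=(\mathfrak r:\mathfrak r)$, and then compute this idealizer explicitly using $g(\theta)h(\theta)=qF(\theta)$ to reduce membership to the divisibility $\overline{t_i}\mid\overline F$ on the ramified components. Your identification of the two delicate points---the mod~$q^2$ bookkeeping in the idealizer computation and the converse direction of the radical-idealizer maximality criterion---is accurate; both are genuine but standard, and your sketch handles them at the expected level of detail for a proof outline.
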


The next two theorems are due to Capelli \cite{S}.
 \begin{thm}\label{Thm:Capelli1}  Let $f(x)$ and $h(x)$ be polynomials in $\Q[x]$ with $f(x)$ irreducible. Suppose that $f(\alpha)=0$. Then $f(h(x))$ is reducible over $\Q$ if and only if $h(x)-\alpha$ is reducible over $\Q(\alpha)$.
 \end{thm}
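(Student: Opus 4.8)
The plan is to prove the equivalent reformulation that $f(h(x))$ is \emph{irreducible} over $\Q$ if and only if $h(x)-\alpha$ is \emph{irreducible} over $\Q(\alpha)$; since a nonconstant polynomial is reducible precisely when it fails to be irreducible, this yields the stated equivalence, where I may assume $\deg h\ge 1$, the case of constant $h$ being degenerate. Write $n=\deg f$ and $d=\deg h$, so that $\deg f(h(x))=nd$, and fix an algebraic closure $\overline{\Q}$ of $\Q$. Because $f(\alpha)=0$ and $f$ is irreducible, $f$ is a scalar multiple of the minimal polynomial of $\alpha$, so $[\Q(\alpha):\Q]=n$.

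The engine of the proof is a single field-degree computation in a tower. I would choose $\beta\in\overline{\Q}$ with $h(\beta)=\alpha$; then $f(h(\beta))=f(\alpha)=0$, so $\beta$ is a root of $f(h(x))$, and moreover $\alpha=h(\beta)\in\Q(\beta)$, which produces the tower $\Q\subseteq\Q(\alpha)\subseteq\Q(\beta)$. The minimal polynomial of $\beta$ over $\Q(\alpha)$ divides $h(x)-\alpha$, so $[\Q(\beta):\Q(\alpha)]\le d$, with equality exactly when $h(x)-\alpha$ is irreducible over $\Q(\alpha)$. The tower law then gives $[\Q(\beta):\Q]=[\Q(\beta):\Q(\alpha)]\cdot n$, and the two directions fall out by comparing this with $\deg f(h(x))=nd$.

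For the reverse direction, suppose $h(x)-\alpha$ is irreducible over $\Q(\alpha)$. Then $[\Q(\beta):\Q(\alpha)]=d$, so $[\Q(\beta):\Q]=nd=\deg f(h(x))$; since $\beta$ is a root of $f(h(x))$ and the degree of its minimal polynomial over $\Q$ already equals $\deg f(h(x))$, this forces $f(h(x))$ to be a scalar multiple of that minimal polynomial, hence irreducible. For the forward direction I would argue contrapositively: if $h(x)-\alpha$ is reducible over $\Q(\alpha)$, pick $\beta$ to be a root of a proper irreducible factor $g\mid (h(x)-\alpha)$ over $\Q(\alpha)$, so that $[\Q(\beta):\Q(\alpha)]=\deg g<d$ and hence $[\Q(\beta):\Q]=n\deg g<nd=\deg f(h(x))$; the minimal polynomial of $\beta$ over $\Q$ then divides $f(h(x))$ but has strictly smaller degree, exhibiting $f(h(x))$ as reducible.

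The only real subtlety, used in the reverse direction, is the step that a root $\beta$ with $[\Q(\beta):\Q]=\deg f(h(x))$ forces $f(h(x))$ itself, and not merely a proper factor, to be the minimal polynomial of $\beta$: this is exactly the degree count $nd=\deg f(h(x))$, together with the fact that irreducibility in $\Q[x]$ is insensitive to the nonzero leading scalar of $f(h(x))$. Beyond this, the argument is purely the multiplicativity of field degrees in the tower $\Q\subseteq\Q(\alpha)\subseteq\Q(\beta)$, the crucial structural input being simply that $\alpha=h(\beta)$ lies in $\Q(\beta)$.
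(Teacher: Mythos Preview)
Your argument is correct and is essentially the standard proof of Capelli's theorem via the degree count in the tower $\Q\subseteq\Q(\alpha)\subseteq\Q(\beta)$. Note, however, that the paper does not supply its own proof of this statement: it merely records the theorem as a classical result of Capelli and cites Schinzel's monograph \cite{S}, so there is no in-paper argument to compare against.
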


\begin{thm}\label{Thm:Capelli2}  Let $c\in \Z$ with $c\geq 2$, and let $\alpha\in\C$ be algebraic.  Then $x^c-\alpha$ is reducible over $\Q(\alpha)$ if and only if either there is a prime $p$ dividing $c$ such that $\alpha=\gamma^p$ for some $\gamma\in\Q(\alpha)$ or $4\mid c$ and $\alpha=-4\gamma^4$ for some $\gamma\in\Q(\alpha)$.
\end{thm}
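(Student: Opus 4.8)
The plan is to establish both directions of this Vahlen--Capelli criterion over the field $F=\Q(\alpha)$; in fact nothing specific to $F$ is used, so the same argument works over an arbitrary field. Write $F^{m}=\{\beta^{m}:\beta\in F\}$, and note we may assume $\alpha\ne 0$, the case $\alpha=0$ being trivial. The sufficiency (``if'') direction rests on two explicit factorizations. If $\alpha=\gamma^{p}$ with $p$ a prime dividing $c$, then setting $m=c/p$ we have
\[
x^{c}-\alpha=(x^{m})^{p}-\gamma^{p}=(x^{m}-\gamma)\bigl((x^{m})^{p-1}+\cdots+\gamma^{p-1}\bigr),
\]
and since $1\le m<c$ this is a proper factorization. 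If instead $4\mid c$ and $\alpha=-4\gamma^{4}$, then with $m=c/4$ the Sophie Germain identity gives
\[
x^{c}-\alpha=(x^{m})^{4}+4\gamma^{4}=\bigl(x^{2m}-2\gamma x^{m}+2\gamma^{2}\bigr)\bigl(x^{2m}+2\gamma x^{m}+2\gamma^{2}\bigr),
\]
again a proper factorization since $0<2m<c$. Hence in either case $x^{c}-\alpha$ is reducible over $F$.

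For necessity I would prove the contrapositive: assuming $\alpha\notin F^{p}$ for every prime $p\mid c$ and, when $4\mid c$, also $\alpha\notin -4F^{4}$, I would show that $x^{c}-\alpha$ is irreducible. The engine is the prime-exponent case: \emph{if $p$ is prime and $\alpha\notin F^{p}$, then $x^{p}-\alpha$ is irreducible over $F$.} To see this, suppose $\theta$ is a root and $g(x)\in F[x]$ is a monic factor of degree $d$ with $0<d<p$. The roots of $x^{p}-\alpha$ are the $\zeta^{i}\theta$ for a primitive $p$-th root of unity $\zeta$, so the constant term of $g$ has the shape $b=\pm\zeta^{s}\theta^{d}\in F$. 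Raising to the $p$-th power and using $\zeta^{p}=1$ gives $b^{p}=\pm\alpha^{d}\in F^{p}$. Since $\gcd(d,p)=1$, writing $ud+vp=1$ and combining $b^{p}$ with $\alpha^{p}\in F^p$ forces $\alpha\in F^{p}$ (the ambiguous sign is harmless, as $-1\in F^{p}$ for odd $p$, while for $p=2$ one has $d=1$ and $b^{2}=\alpha$ outright), contradicting the hypothesis.

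It remains to bootstrap from prime exponents to general $c$. The plan is to climb the radical tower $F\subset F(\alpha^{1/p})\subset F(\alpha^{1/p^{2}})\subset\cdots$ for each maximal prime power $p^{e}$ dividing $c$, applying the prime-exponent lemma at each rung to the element $\alpha^{1/p^{i}}$ to conclude that $[F(\alpha^{1/p^{e}}):F]=p^{e}$, and then to multiply these degrees across the distinct primes dividing $c$: since the $p^{e}$ are pairwise coprime, their product $c$ divides $[F(\alpha^{1/c}):F]$, which also divides $c$, forcing equality and hence the irreducibility of $x^{c}-\alpha$. The main obstacle is exactly the inductive step of the climb, namely verifying that the non-$p$-th-power hypothesis \emph{propagates}: that $\alpha^{1/p^{i}}$ remains a non-$p$-th power in $F(\alpha^{1/p^{i}})$. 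For odd $p$ this propagation goes through cleanly, but for $p=2$ it can fail, and it fails precisely via the factorization in the first paragraph, which is the source of the exceptional condition $\alpha\in -4F^{4}$. I would therefore treat the $2$-power tower separately, showing that $x^{2^{e}}-\alpha$ is irreducible exactly when $\alpha\notin F^{2}$ and (for $e\ge 2$) $\alpha\notin -4F^{4}$, and that no further obstruction arises; assembling the prime-power results via the coprime multiplicativity above then completes the contrapositive.
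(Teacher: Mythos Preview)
The paper does not prove this theorem: it is stated as a classical result of Capelli and cited from Schinzel's monograph, with no argument given. So there is no ``paper's own proof'' against which to compare.

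That said, your outline is the standard route to the Vahlen--Capelli criterion and is sound. The sufficiency direction is complete. In the necessity direction, your prime-exponent lemma is correctly argued, and you have correctly located the crux of the induction in the \emph{propagation} step: showing that if $\alpha\notin F^{p}$ then $\alpha^{1/p}\notin F(\alpha^{1/p})^{p}$. For odd $p$ this follows from a short norm argument (if $\alpha^{1/p}=\beta^{p}$ in the extension, take $N_{F(\alpha^{1/p})/F}$ of both sides to get $\alpha=N(\alpha^{1/p})=N(\beta)^{p}\in F^{p}$, using that the extension has odd degree $p$ so that the sign in $N(\alpha^{1/p})=(-1)^{p-1}\alpha$ is harmless); you assert this goes through cleanly but do not write it, so that is the one place a reader would want a line of justification. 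Your identification of the $p=2$ anomaly and the role of the Sophie Germain factorization is exactly right, and the coprime-degree assembly across distinct prime powers is the correct way to finish.
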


The discriminant of $\S_k(x^p)$ given in the next proposition follows from the formula for the discriminant of an arbitrary composition of two polynomials which is due to John Cullinan \cite{Cullinan}. 
\begin{prop}\label{Prop:discrim}
$\Delta(\S_k(x^p))=(-1)^{(p-2)(p-1)/2}p^{3p}(k^2+3k+9)^{2p}$.
\end{prop}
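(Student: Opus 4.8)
The plan is to compute $\Delta(\S_k(x^p))$ directly through resultants; the general composition formula of Cullinan \cite{Cullinan} would of course also yield it in one line upon substituting $f=\S_k$, $g(x)=x^p$, and \eqref{Eq:Delta(S)}. Recall that for a monic $F\in\Z[x]$ of degree $N$ one has $\Delta(F)=(-1)^{N(N-1)/2}\operatorname{Res}(F,F')$, that the resultant is multiplicative in each argument, and that $\operatorname{Res}(F,c)=c^{N}$ for a constant $c$. Put $F(x)=\S_k(x^p)$, so that $N=3p$ and, by the chain rule, $F'(x)=p\,x^{p-1}\,\S_k'(x^p)$. Therefore
\[
\operatorname{Res}(F,F')=\operatorname{Res}(F,p)\cdot\operatorname{Res}\big(F,x^{p-1}\big)\cdot\operatorname{Res}\big(\S_k(x^p),\S_k'(x^p)\big)=p^{3p}\cdot\operatorname{Res}\big(F,x^{p-1}\big)\cdot\operatorname{Res}\big(\S_k(x^p),\S_k'(x^p)\big),
\]
and the factor $p^{3p}$ is already the desired power of $p$. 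The middle factor is a sign only: $\operatorname{Res}\big(F,x^{p-1}\big)=\operatorname{Res}(F,x)^{p-1}=\big((-1)^{3p}F(0)\big)^{p-1}=(-1)^{(3p+1)(p-1)}$, since $F(0)=\S_k(0)=-1$.

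For the last factor I would record the substitution identity
\[
\operatorname{Res}\big(A(x^p),B(x^p)\big)=\operatorname{Res}(A,B)^{p}\qquad\text{for monic }A\in\Z[x]\text{ and any }B\in\Z[x],
\]
which follows because each root $\alpha$ of $A$ lifts to exactly $p$ roots $\gamma$ of $A(x^p)$ with $\gamma^p=\alpha$, so $\operatorname{Res}\big(A(x^p),B(x^p)\big)=\prod_{\gamma}B(\gamma^p)=\big(\prod_{\alpha}B(\alpha)\big)^{p}=\operatorname{Res}(A,B)^{p}$. Applying this with $A=\S_k$, $B=\S_k'$ and using $\operatorname{Res}(\S_k,\S_k')=(-1)^{3}\Delta(\S_k)=-(k^2+3k+9)^2$ from \eqref{Eq:Delta(S)} gives $\operatorname{Res}\big(\S_k(x^p),\S_k'(x^p)\big)=(-1)^p(k^2+3k+9)^{2p}$, which supplies the factor $(k^2+3k+9)^{2p}$.

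Putting the three pieces together, $\Delta(\S_k(x^p))=(-1)^{e}\,p^{3p}(k^2+3k+9)^{2p}$ with $e=\tfrac{3p(3p-1)}{2}+(3p+1)(p-1)+p$, so all that is left is the parity check $e\equiv\tfrac{(p-2)(p-1)}{2}\pmod 2$. I expect this sign bookkeeping to be the only real obstacle: the magnitude $p^{3p}(k^2+3k+9)^{2p}$ drops out immediately, but reconciling the several $(-1)$-contributions takes care. The check itself is short — handle $p=2$ separately, and for odd $p$ note that $(3p+1)(p-1)\equiv0$, that $p\equiv1\pmod2$, and that $\tfrac{3p(3p-1)}{2}$ is odd precisely when $p\equiv1\pmod4$; comparing with $\tfrac{(p-2)(p-1)}{2}$ in the two residue classes modulo $4$ then finishes it. Alternatively, quoting Cullinan's closed formula bypasses this entirely, since the sign is already packaged there.
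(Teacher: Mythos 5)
Your computation is correct, but it takes a genuinely different route from the paper: the paper offers no derivation at all, simply asserting that the formula ``follows from the formula for the discriminant of an arbitrary composition of two polynomials'' due to Cullinan \cite{Cullinan}, with the sign already packaged in that general result. You instead derive everything from first principles: the factorization $F'(x)=p\,x^{p-1}\S_k'(x^p)$, multiplicativity of the resultant, the evaluation $\operatorname{Res}(F,x^{p-1})=(-1)^{(3p+1)(p-1)}$ from $F(0)=-1$, and the substitution identity $\operatorname{Res}\bigl(A(x^p),B(x^p)\bigr)=\operatorname{Res}(A,B)^p$, which you justify correctly by grouping the $p$ preimages of each root of $A$ (legitimate here since $\S_k(0)\neq 0$). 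I checked your sign exponent $e=\tfrac{3p(3p-1)}{2}+(3p+1)(p-1)+p$: for $p\equiv 1\pmod 4$ one gets $e\equiv 1+0+1\equiv 0$ and $\tfrac{(p-2)(p-1)}{2}$ even; for $p\equiv 3\pmod 4$ one gets $e\equiv 0+0+1\equiv 1$ and $\tfrac{(p-2)(p-1)}{2}$ odd; and $p=2$ gives $e=24$ versus exponent $0$ --- so the parity check you outline does close in every case. What your approach buys is a self-contained, verifiable proof in place of a citation to an unpublished note; what the paper's approach buys is brevity and automatic handling of the sign. Either is acceptable, though if you write yours up you should carry the parity verification out explicitly rather than leaving it as a sketch.
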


The next proposition follows from \cite{Shanks}.
\begin{prop}\label{Prop:Shanks1}
Suppose that $\S_k(\rho)=0$. Then the other zeros of $\S_k(x)$ are 
\[\sigma=\frac{-1}{\rho+1}=\rho^2-(k+1)\rho-2 \quad \mbox{and} \quad \tau=\frac{-(\rho+1)}{\rho}=-\rho^2+k\rho+k+2.\] 
The group of units of the cyclic cubic field $\Q(\rho)$ has rank 2, with  independent fundamental units $\rho$ and $\rho+1$.
\end{prop}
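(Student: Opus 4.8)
The plan is to pin down the conjugates of $\rho$ explicitly, deduce that $\Q(\rho)$ is cyclic of degree $3$ (hence totally real), and then analyze the unit group.

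First I would record the polynomial identity
\[(x+1)^{3}\,\S_k\!\left(\frac{-1}{x+1}\right)=-\,\S_k(x),\]
checked by a short expansion (with $y=x+1$ the left side equals $-y^{3}+(k+3)y^{2}-ky-1$, and substituting $y=x+1$ returns $-\S_k(x)$). Since $\S_k(0)=-1\ne 0$ and $\S_k(-1)\ne 0$, a root $\rho$ satisfies $\rho\notin\{0,-1\}$, so $\sigma:=-1/(\rho+1)$ lies in $\Q(\rho)$ and, by the identity, $\S_k(\sigma)=0$. Applying the M\"obius map $x\mapsto -1/(x+1)$ once more produces $\tau:=-1/(\sigma+1)=-(\rho+1)/\rho$ with $\S_k(\tau)=0$, and a third application returns $\rho$; thus this map permutes $\{\rho,\sigma,\tau\}$ in a $3$-cycle. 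As $\S_k$ is irreducible over $\Q$ (Section~\ref{Section:Intro}) and all of its roots lie in $\Q(\rho)$, the field $\Q(\rho)$ is Galois over $\Q$ with cyclic Galois group of order $3$; in particular it is totally real. The closed forms $\sigma=\rho^{2}-(k+1)\rho-2$ and $\tau=-\rho^{2}+k\rho+k+2$ then follow by reducing $(\rho+1)\sigma=-1$ and $\rho\tau=-(\rho+1)$ modulo $\rho^{3}=k\rho^{2}+(k+3)\rho+1$.

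Since $\Q(\rho)$ is totally real of degree $3$, Dirichlet's unit theorem gives that $\Z_K^{\times}$ has rank $r_1+r_2-1=2$ with torsion subgroup $\{\pm1\}$. That $\rho$ and $\rho+1$ are units follows from
\[\rho\,\sigma\,\tau=\rho\cdot\frac{-1}{\rho+1}\cdot\frac{-(\rho+1)}{\rho}=1,\qquad (\rho+1)(\sigma+1)(\tau+1)=(\rho+1)\cdot\frac{\rho}{\rho+1}\cdot\frac{-1}{\rho}=-1,\]
since these are the norms $N_{K/\Q}(\rho)$ and $N_{K/\Q}(\rho+1)$, and an algebraic integer of norm $\pm1$ is a unit.

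It remains to show that $\{\rho,\rho+1\}$ is a \emph{fundamental} system of units, i.e.\ $\langle -1,\rho,\rho+1\rangle=\Z_K^{\times}$, and this is the crux. The approach is through regulators. The three real embeddings of $\Q(\rho)$ correspond to the real roots $\rho_1>\rho_2>\rho_3$ of $\S_k$, which are localized (for $k\ge1$) by $\rho_1\in(k+1,k+2)$, $\rho_2\in(-1,0)$, $\rho_3\in(-2,-1)$ via the signs of $\S_k$ at those endpoints; from this one computes the regulator $R:=R(\rho,\rho+1)$ of $\langle\rho,\rho+1\rangle$, obtaining both $R>0$ (so $\rho$ and $\rho+1$ are multiplicatively independent) and an explicit bound $R=O(\log^{2}k)$. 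Since the index $[\Z_K^{\times}:\langle -1,\rho,\rho+1\rangle]=R/R_K$ is a positive integer, where $R_K$ is the regulator of $K$, it suffices to prove $R<2R_K$. For this one invokes a lower bound for the regulator of a totally real cubic field of Cusick--Pohst type, $R_K\gg\log^{2}|\Delta(K)|$, together with the fact that $\log|\Delta(K)|$ is comparable to $\log k$ (the bound $|\Delta(K)|\le(k^{2}+3k+9)^{2}$ follows from \eqref{Eq:Delta(S)} and \eqref{Eq:Dis-Dis}, and a matching lower bound from the squarefreeness of $\D$); this handles all $k$ past an explicit threshold, and the finitely many remaining values are checked directly. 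These computations are carried out in \cite{Shanks}. The main obstacle is precisely this last step: the two regulator estimates must be made sharp enough for the implied constants to be compatible, leaving only a small exceptional set of $k$, none of which is a genuine exception.
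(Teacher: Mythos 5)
The paper offers no proof of this proposition at all --- it simply says the statement ``follows from \cite{Shanks}'' --- so there is no internal argument to compare yours against. Your handling of everything except the last clause is complete and correct: the identity $(x+1)^3\S_k(-1/(x+1))=-\S_k(x)$ checks out, the M\"obius map $x\mapsto -1/(x+1)$ does cycle the three roots, the closed forms for $\sigma$ and $\tau$ reduce correctly modulo $\rho^3=k\rho^2+(k+3)\rho+1$, the field is cyclic cubic and totally real, Dirichlet gives rank $2$, and the norm computations $N(\rho)=1$, $N(\rho+1)=-1$ show both elements are units. (Incidentally, your value $\S_k(-1)=1$ is right; the paper's Introduction misstates it as $-1$.)

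The one genuine gap is the one you flag yourself: fundamentality. Your regulator strategy --- localize the three real roots, bound $R(\rho,\rho+1)$ from above by roughly $\tfrac14\log^2(k^2+3k+9)$, bound $R_K$ from below by a Cusick--Pohst-type inequality in $\log|\Delta(K)|$, and force the index $R(\rho,\rho+1)/R_K$ to be $1$ --- is indeed the standard route, but you do not carry out either estimate, and the whole proof lives or dies on whether the two constants are compatible. Moreover, the deferral to \cite{Shanks} is not accurate: Shanks's paper asserts and uses the fundamentality but does not prove it; the rigorous proof that $\{\rho,\rho+1\}$ is a fundamental pair for the \emph{order} $\Z[\rho]$ is due to E.~Thomas (J.~Reine Angew.~Math.~\textbf{310} (1979), 33--55), and upgrading this to the maximal order $\Z_K$ requires knowing $\Z_K=\Z[\rho]$, which is exactly where the standing hypotheses ($\D$ squarefree, $k\not\equiv 3\pmod 9$) and item \ref{Main:I1} of Theorem \ref{Thm:Main} enter --- a dependence worth making explicit, since the paper proves that item only later. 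Either execute the two regulator bounds with explicit constants (and check the finitely many small $k$), or cite Thomas together with the monogenicity statement; as written, the crux is asserted rather than proved.
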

We have the following immediate corollary of Proposition \ref{Prop:Shanks1}.
\begin{cor}\label{Cor:Shanks1}
  The polynomial $\S_k(x)$ is either irreducible modulo $p$ or factors completely into linear factors modulo $p$.
\end{cor}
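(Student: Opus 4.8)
The plan is to exploit that $\S_k(x)$ is a cyclic cubic --- concretely, that Proposition~\ref{Prop:Shanks1} expresses the two conjugate roots $\sigma,\tau$ as \emph{integer polynomials} in $\rho$ --- together with the elementary fact that a cubic over $\F_p$ is either irreducible or has a root in $\F_p$. If $\overline{\S_k}(x)$ has no root in $\F_p$ then, being a cubic, it is irreducible over $\F_p$ and there is nothing more to prove; so assume $\overline{\S_k}(x)$ has a root $a\in\F_p$.

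The substantive step is to promote the identities of Proposition~\ref{Prop:Shanks1} to a genuine polynomial identity over $\Z$ that survives reduction modulo $p$. Write $P(x)=x^2-(k+1)x-2$ and $R(x)=-x^2+kx+k+2$, so that $\sigma=P(\rho)$ and $\tau=R(\rho)$. Then, as polynomials in the variable $Y$ with coefficients in $\Q(\rho)$,
\[\S_k(Y)=(Y-\rho)(Y-\sigma)(Y-\tau)=(Y-\rho)\bigl(Y-P(\rho)\bigr)\bigl(Y-R(\rho)\bigr).\]
Hence $H(X,Y):=\S_k(Y)-(Y-X)\bigl(Y-P(X)\bigr)\bigl(Y-R(X)\bigr)\in\Z[X,Y]$ vanishes identically in $Y$ when $X=\rho$. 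Since $\S_k(X)$ is the monic minimal polynomial of $\rho$ over $\Q$, each coefficient of $H$ viewed as a polynomial in $Y$ is divisible by $\S_k(X)$ in $\Q[X]$, and monicity of $\S_k$ then forces the quotients to lie in $\Z[X]$; thus $H(X,Y)=\S_k(X)\,G(X,Y)$ for some $G\in\Z[X,Y]$. Reducing modulo $p$ and specializing $X=a$, using $\overline{\S_k}(a)=0$, yields
\[\overline{\S_k}(Y)=(Y-a)\bigl(Y-\overline{P}(a)\bigr)\bigl(Y-\overline{R}(a)\bigr)\qquad\text{in }\F_p[Y],\]
so $\overline{\S_k}$ factors completely into linear factors over $\F_p$, the three roots being $a$, $\;a^2-(k+1)a-2$, and $-a^2+ka+k+2$ (possibly with repetitions when $p\mid k^2+3k+9$).

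Since the statement is advertised as an immediate corollary of Proposition~\ref{Prop:Shanks1}, I do not anticipate a genuine obstacle. The one point deserving a line of care is exactly the passage just described: the conjugate-root formulas a priori hold only inside the number field $\Q(\rho)$, and one must check that they descend to an identity in $\Z[X,Y]$ that can be reduced modulo $p$; this is taken care of by the monic-division argument above. One could instead argue through decomposition groups --- $\Gal(\Q(\rho)/\Q)\cong C_3$ forces every unramified prime to be inert or totally split in $\Q(\rho)$ --- but that route would require separately handling the primes dividing $k^2+3k+9$ and the index $[\Z_K:\Z[\rho]]$, so the direct computation is cleaner and self-contained.
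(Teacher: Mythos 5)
Your proof is correct and follows the same route the paper intends: the corollary is stated as immediate from Proposition \ref{Prop:Shanks1} precisely because the conjugate zeros $\sigma,\tau$ are integer polynomials in $\rho$, so a single root $a\in\F_p$ forces all three roots into $\F_p$, while no root forces irreducibility of the cubic. Your only addition is to make explicit the descent of the identity to $\Z[X,Y]$ via monic division by $\S_k(X)$, which is a legitimate (and correctly executed) way to justify reducing the conjugate-root formulas modulo $p$.
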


The next theorem follows from Corollary (2.10) in \cite{Neukirch}.
\begin{thm}\label{Thm:CD}
  Let $K$ and $L$ be number fields with $K\subset L$. Then \[\Delta(K)^{[L:K]} \bigm|\Delta(L).\]
\end{thm}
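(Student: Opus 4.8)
The plan is to prove the conductor-discriminant divisibility $\Delta(K)^{[L:K]}\mid\Delta(L)$ for a tower $K\subset L$ of number fields by passing to the relative discriminant and using the tower formula. Write $n=[L:K]$ and $m=[K:\Q]$, so that $[L:\Q]=mn$. The key identity is the tower (transitivity) formula for discriminants, which expresses $\Delta(L)$, the absolute discriminant $\Delta(L/\Q)$, in terms of the discriminant of $K$ and the relative discriminant ideal $\mathfrak d_{L/K}$ of $L$ over $K$.

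The main steps are as follows. First I would recall the standard transitivity relation for discriminants in a tower $\Q\subset K\subset L$, namely
\[
\Delta(L)=\Delta(K)^{[L:K]}\cdot N_{K/\Q}\!\left(\mathfrak d_{L/K}\right),
\]
where $\mathfrak d_{L/K}\subseteq\Z_K$ is the relative discriminant ideal and $N_{K/\Q}$ denotes the ideal norm from $K$ down to $\Q$ (interpreted as a positive integer, or up to sign when one works with the signed integer discriminants). Second, I would observe that $N_{K/\Q}(\mathfrak d_{L/K})$ is an integer, since the ideal norm of any nonzero integral ideal of $\Z_K$ is a positive integer. Third, reading the displayed identity as a statement about integers, $\Delta(K)^{[L:K]}$ is a factor of $\Delta(L)$, which is exactly the divisibility $\Delta(K)^{[L:K]}\mid\Delta(L)$ claimed in the theorem. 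Since the excerpt attributes the result to Corollary (2.10) in \cite{Neukirch}, I would cite that transitivity formula directly rather than reprove it from first principles.

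The only genuine point requiring care is the bookkeeping of signs and the precise meaning of $N_{K/\Q}(\mathfrak d_{L/K})$. The absolute discriminants $\Delta(K)$ and $\Delta(L)$ are signed integers, whereas the relative discriminant is an ideal whose norm is a positive integer; one must check that the sign of $\Delta(K)^{[L:K]}$ matches the sign of $\Delta(L)$ so that the integer ratio $\Delta(L)/\Delta(K)^{[L:K]}$ really equals $\pm N_{K/\Q}(\mathfrak d_{L/K})\in\Z$. For the purpose of divisibility this is harmless, since divisibility in $\Z$ ignores signs, so the conclusion $\Delta(K)^{[L:K]}\mid\Delta(L)$ holds regardless of how the signs are normalized. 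Thus the substantive content is entirely contained in the transitivity formula, and the proof reduces to quoting that formula and noting that the extra factor is an integer.

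I expect no serious obstacle here: the result is a direct and well-known consequence of the discriminant tower formula, and the entire difficulty—such as it is—lies in correctly invoking the relative discriminant and its norm. Accordingly I would keep the proof short, stating the transitivity identity, identifying the cofactor $N_{K/\Q}(\mathfrak d_{L/K})$ as a rational integer, and concluding the divisibility immediately.
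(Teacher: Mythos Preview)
Your proposal is correct and matches the paper's approach exactly: the paper does not give a proof but simply states that the theorem follows from Corollary~(2.10) in \cite{Neukirch}, which is precisely the discriminant tower formula $\Delta(L)=\Delta(K)^{[L:K]}\cdot N_{K/\Q}(\mathfrak d_{L/K})$ that you invoke. Your discussion of signs and of the cofactor being an integer is an appropriate elaboration of why the divisibility follows.
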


\section{The Proof of Theorem \ref{Thm:Main}}\label{Section:Main}
We first prove some lemmas.
 \begin{lemma}\label{Lem:Main0}
   The polynomial $\S_k(x^p)$ is irreducible over $\Q$.
  \end{lemma}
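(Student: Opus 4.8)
The plan is to apply the two theorems of Capelli (Theorems \ref{Thm:Capelli1} and \ref{Thm:Capelli2}) to reduce the irreducibility of $\S_k(x^p)$ to the assertion that $\rho$ is not a $p$-th power in $\Q(\rho)$, and then to rule this out using the description of the unit group of $\Q(\rho)$ furnished by Proposition \ref{Prop:Shanks1}.

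Recall that $\S_k(x)$ is irreducible over $\Q$, as was observed in Section \ref{Section:Intro}. Fix $\rho$ with $\S_k(\rho)=0$ and set $K=\Q(\rho)$. By Theorem \ref{Thm:Capelli1}, applied with $f=\S_k$ and $h(x)=x^p$, the polynomial $\S_k(x^p)$ is reducible over $\Q$ if and only if $x^p-\rho$ is reducible over $K$. By Theorem \ref{Thm:Capelli2} (with $c=p$), since $p$ is prime the only prime dividing $p$ is $p$ itself, and $4\nmid p$; hence $x^p-\rho$ is reducible over $K$ if and only if $\rho=\gamma^p$ for some $\gamma\in K$. So it suffices to prove that $\rho$ is not a $p$-th power in $K$.

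Suppose, for contradiction, that $\rho=\gamma^p$ with $\gamma\in K$. Then $\gamma$ is a root of the monic polynomial $x^p-\rho\in\Z_K[x]$, so $\gamma$ is integral over $\Z_K$ and therefore $\gamma\in\Z_K$. Since the constant term of $\S_k(x)$ is $-1$, we have $N_{K/\Q}(\rho)=1$, so $N_{K/\Q}(\gamma)^p=1$ and thus $N_{K/\Q}(\gamma)=\pm 1$; hence $\gamma\in\Z_K^{\times}$. By Proposition \ref{Prop:Shanks1}, $K$ is a cyclic (hence totally real) cubic field whose unit group has rank $2$ with fundamental units $\rho$ and $\rho+1$, so $\Z_K^{\times}=\langle -1\rangle\times\langle\rho\rangle\times\langle\rho+1\rangle$. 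Write $\gamma=(-1)^{c}\rho^{a}(\rho+1)^{b}$ with $a,b\in\Z$ and $c\in\{0,1\}$. Then
\[\rho=\gamma^p=(-1)^{cp}\rho^{ap}(\rho+1)^{bp},\]
and comparing exponents on the free part $\langle\rho\rangle\times\langle\rho+1\rangle$ (which is legitimate since $\rho$ and $\rho+1$ are multiplicatively independent) gives $ap=1$ and $bp=0$. This forces $p\mid 1$, contradicting $p\ge 2$. Hence $\rho$ is not a $p$-th power in $K$, and $\S_k(x^p)$ is irreducible over $\Q$.

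I expect the only slightly delicate point to be the passage from $\gamma\in K$ to $\gamma\in\Z_K^{\times}$ — needed so that Proposition \ref{Prop:Shanks1} may be applied — together with the observation that the exceptional case ``$4\mid c$'' of Theorem \ref{Thm:Capelli2} is vacuous here because the exponent $p$ is prime; the rest is a routine exponent comparison in the unit group.
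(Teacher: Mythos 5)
Your proposal is correct and follows essentially the same route as the paper: reduce via Capelli's theorems to showing $\rho$ is not a $p$-th power in $\Q(\rho)$, deduce that such a $\gamma$ would be a unit by taking norms, and derive a contradiction from the multiplicative independence of the fundamental units $\rho$ and $\rho+1$. Your treatment is marginally more careful on two minor points (the integrality of $\gamma$ and the torsion factor $\langle -1\rangle$), but the argument is the same.
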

  \begin{proof}
       Assume, by way of contradiction, that $\S_k(x^p)$ is reducible. Suppose that $\S_k(\rho)=0$. Then, since $\S_k(x)$ is irreducible over $\Q$, we have that $\rho=\gamma^p$ for some $\gamma\in \Q(\rho)$, by Theorems \ref{Thm:Capelli1} and \ref{Thm:Capelli2}.
Then, we see by taking norms that
  \[\NN(\gamma)^p=\NN(\gamma^p)=\NN(\rho)=1,\] which implies that $\NN(\gamma)=1$, since $\NN(\gamma)\in \Z$ and $p\ge 3$. Thus, $\gamma$ is a unit and
  \[\gamma=\pm \rho^r(\rho+1)^s,\] for some $r,s\in \Z$, 
  by Proposition \ref{Prop:Shanks1}.
   Consequently,
  \[\rho=\gamma^p=(\pm 1)^p\rho^{rp}(\rho+1)^{sp},\] which implies that
  \[(\pm 1)^p\rho^{rp-1}(\rho+1)^{sp}=1,\] contradicting the independence of $\rho$ and $\rho+1$. 
  \end{proof}

\begin{lemma}\label{Lem:Main1}
 Let $\S_k(x)$ be irreducible in $\F_p[x]$. Suppose that $\rho$, $\sigma$ and $\tau$, as given in Proposition \ref{Prop:Shanks1}, are the zeros of $\S_k(x)$ in $\F_{p^3}$.   Then
 \begin{enumerate}
    \item \label{Per:I1} $\ord_m(\rho)=\ord_m(\sigma)=\ord_m(\tau)=\pi(m)$, where $m\in \{p,p^2\}$,
    \item \label{Per:I2} $p^2+p+1\equiv 0\pmod{\pi(p)}$,
    \item \label{Per:I3} $\pi(p^2)\in \{\pi(p),p\pi(p)\}$.
  \end{enumerate}
\end{lemma}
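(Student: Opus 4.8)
The plan is to work with the companion matrix $C$ of $\S_k(x)$ and track the multiplicative orders of $\rho,\sigma,\tau$ against the period $\pi(m)$. First I would observe that, because $\S_k(x)$ is irreducible in $\F_p[x]$, it is the minimal polynomial of $\rho$ over $\F_p$ and its three roots in $\F_{p^3}$ are the Frobenius conjugates $\rho,\rho^p,\rho^{p^2}$; by Proposition~\ref{Prop:Shanks1} these are exactly $\rho,\sigma,\tau$ (up to labeling), so conjugation preserves multiplicative order and $\ord_m(\rho)=\ord_m(\sigma)=\ord_m(\tau)$ for $m\in\{p,p^2\}$. The key identification for part~\eqref{Per:I1} is that the state vectors $(U_n,U_{n+1},U_{n+2})$ evolve by the companion matrix $C$ of $\S_k$, so $\pi(m)$ is the multiplicative order of $\overline C$ in $\mathrm{GL}_3(\Z/m)$; since $\S_k$ is irreducible mod $p$, $C$ is diagonalizable over $\F_{p^3}$ with eigenvalues $\rho,\sigma,\tau$, and $\ord(\overline C)=\lcm$ of the eigenvalue orders $=\ord_p(\rho)$. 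I would need the easy fact that the initial triple $(U_0,U_1,U_2)=(0,0,1)$ is a cyclic vector for $C$ (equivalently, $\{1,\theta,\theta^2\}$ spans), so no collapse of the period occurs; this handles the $m=p$ case, and the $m=p^2$ case follows the same way using that $\S_k$ remains irreducible (hence separable, with a well-defined root $\rho$ in the unramified extension) over $\Z/p^2$.

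For part~\eqref{Per:I2}, I would use that $\rho\in\F_{p^3}^{\times}$, a cyclic group of order $p^3-1=(p-1)(p^2+p+1)$. In fact $\rho$ lies in no proper subfield (its minimal polynomial has degree $3$), and moreover $\Norm_{\F_{p^3}/\F_p}(\rho)=-(-1)=1$ by \eqref{Eq:Shankspoly} — wait, the constant term of $\S_k$ is $-1$, so $\rho\sigma\tau=1$, i.e.\ $\rho^{1+p+p^2}=1$. Hence $\ord_p(\rho)\mid p^2+p+1$, which is exactly \eqref{Per:I2} once we invoke \eqref{Per:I1}. This is the cleanest route and avoids any case analysis.

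For part~\eqref{Per:I3}, I would pass to the matrix picture over $\Z/p^2$. Write $\pi(p^2)=\ord_{p^2}(\overline C)$ and $\pi(p)=\ord_p(\overline C)$; reducing mod $p$ shows $\pi(p)\mid\pi(p^2)$. Setting $C^{\pi(p)}=I+pB$ for some integer matrix $B$, the standard binomial expansion $(I+pB)^t\equiv I+tpB\pmod{p^2}$ shows $C^{t\,\pi(p)}\equiv I\pmod{p^2}$ iff $p\mid t$ or $B\equiv0\pmod p$; since $p$ is odd this gives $\pi(p^2)\in\{\pi(p),p\,\pi(p)\}$. The main obstacle I anticipate is not any single step but the bookkeeping in part~\eqref{Per:I1}: one must be careful that $\pi(m)$, defined as the period of the \emph{sequence} $(U_n)$, really equals the order of the companion matrix and not a proper divisor of it — this requires checking that the orbit of the specific initial vector $(0,0,1)$ under $C$ has full period, which in turn uses the irreducibility of $\S_k$ mod $p$ (so that $(0,0,1)$, being nonzero, generates a free rank-one module over $\F_p[\overline C]\cong\F_{p^3}$). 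Everything else is routine once this cyclic-vector point is nailed down.
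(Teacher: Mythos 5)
Your proposal is correct and follows essentially the same route as the paper: identify $\pi(m)$ with the order of the companion matrix, use the Frobenius conjugacy of $\rho,\sigma,\tau$ and the norm relation $\rho\sigma\tau=1$ for parts \eqref{Per:I1} and \eqref{Per:I2}, and the binomial expansion of $C^{\pi(p)}=I+pB$ for part \eqref{Per:I3}. The only difference is that the paper outsources the sequence-period-equals-matrix-order fact to Robinson and the lifting argument to Renault, whereas you supply the cyclic-vector and binomial arguments inline.
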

\begin{proof}
  For items \ref{Per:I1} and \ref{Per:I2}, it follows from \cite{Robinson} that the order, modulo an integer $m\ge 3$, of the companion matrix
  \[\CC=\left[\begin{array}{ccc}
    0&0&1\\
    1&0&k\\
    0&1&k+3
  \end{array}\right]\] for the characteristic polynomial $\S_k(x)$ of $(U_n)$ is $\pi(m)$. Since the eigenvalues of $\CC$ are $\rho$, $\sigma$ and $\tau$, we conclude that
  \[\ord_m\left(\left[\begin{array}{ccc}
    \rho&0&0\\
    0&\sigma &0\\
    0&0&\tau
  \end{array}\right]\right)=\ord_m(\CC)=\pi(m), \quad \mbox{for $m\in \{p,p^2\}$.}
  \] The Frobenius automorphism of $\F_p(\rho)$ transitively permutes the zeros of $\S_k(x)$, so that
  \[\{\rho,\sigma,\tau\}=\{\rho,\rho^p,\rho^{p^2}\}.\] Hence, $\ord_m(\rho)=\ord_m(\sigma)=\ord_m(\tau)$, which proves \ref{Per:I1}. Additionally,
  \[\rho^{p^2+p+1}\equiv \rho\rho^p\rho^{p^2} \equiv \rho\sigma\tau\equiv 1 \pmod{p},\]
  which establishes \ref{Per:I2}. 

  Item \ref{Per:I3} follows by applying, to the matrix $\CC$, an argument given in the proof of \cite[Proposition 1.]{Renault}.
  \end{proof}

\begin{lemma}\label{Lem:Main2}
Suppose that $\S_k(x)$ is irreducible in $\F_p[x]$ and that $\S_k(\rho)=0$, 
where $\rho\in \F_{p^3}$.
Let $\Z_L$ be the ring of integers of $L=\Q(\theta)$, where $\S_k(\theta^p)=0$.  Then
\[[\Z_{L}:\Z[\theta]]\equiv 0\pmod{p}\ \Longleftrightarrow \  \S_k(\rho^p)\equiv 0 \pmod{p^2}.\]
 \end{lemma}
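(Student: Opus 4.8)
The plan is to apply Dedekind's Index Criterion (Theorem \ref{Thm:Dedekind}) to the polynomial $T(x) = \S_k(x^p)$ at the prime $q = p$, and to translate the resulting $\gcd$ condition into a congruence statement about $\S_k(\rho^p) \bmod p^2$. First I would compute the factorization of $\overline{T}(x) = \overline{\S_k}(x^p)$ in $\F_p[x]$. Since $\S_k(x)$ is assumed irreducible in $\F_p[x]$ (a cubic), and raising to the $p$-th power is the Frobenius on $\F_p[x]$, we have $\overline{\S_k}(x^p) = \overline{\S_k}(x)^p$ — wait, that identity holds for the \emph{coefficients}, so more precisely $\overline{\S_k}(x^p) = \bigl(\overline{\S_k}(x)\bigr)^p$ in $\F_p[x]$, because the coefficients of $\S_k$ lie in $\F_p$ and Frobenius fixes them while sending $x \mapsto x^p$. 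Thus $\overline{T}(x) = \overline{t}(x)^p$ where $\overline{t}(x) = \overline{\S_k}(x)$ is irreducible of degree $3$. In the notation of Dedekind's Criterion we then take $g(x) = \S_k(x)$ (a monic lift of $\overline{t}$), $h(x)$ a monic lift of $\overline{T}/\overline{g} = \overline{t}^{\,p-1}$, so $h(x) = \S_k(x)^{p-1}$ is a natural choice, and $F(x) = \bigl(g(x)h(x) - T(x)\bigr)/p = \bigl(\S_k(x)^p - \S_k(x^p)\bigr)/p \in \Z[x]$.

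Next I would analyze $\gcd(\overline{F}, \overline{g}, \overline{h})$ in $\F_p[x]$. Since $\overline{g} = \overline{h}$-radical $= \overline{t}$ is irreducible, $\gcd(\overline{g}, \overline{h}) = \overline{t}$, so the whole $\gcd$ is $1$ if and only if $\overline{t} \nmid \overline{F}$, i.e.\ if and only if $F(\rho) \not\equiv 0 \pmod{p}$ in $\F_{p^3} = \F_p(\rho)$ (here I identify $\overline{t} \mid \overline{F}$ with $F$ vanishing at a root $\rho$ of $\overline{t}$). By Dedekind, $[\Z_L : \Z[\theta]] \equiv 0 \pmod p$ exactly when this $\gcd$ is \emph{not} $1$, i.e.\ exactly when $F(\rho) \equiv 0 \pmod p$. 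Now $p \cdot F(x) = \S_k(x)^p - \S_k(x^p)$; evaluating a lift of $\rho$ and using $\S_k(\rho) \equiv 0 \pmod p$, one has $\S_k(\rho)^p \equiv 0 \pmod{p^p}$ and hence (since $p \ge 2$) $\equiv 0 \pmod{p^2}$, so $p F(\rho) \equiv -\S_k(\rho^p) \pmod{p^2}$, giving $F(\rho) \equiv 0 \pmod p \iff \S_k(\rho^p) \equiv 0 \pmod{p^2}$. Chaining the equivalences yields the claim.

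The step I expect to require the most care is the passage from the polynomial congruence to the element congruence in $\Z[\rho]$ (or rather in the ring of integers of $\F_{p^3}$'s lift): one must fix a lift $\widehat{\rho} \in \Z_{K}$ of $\rho$ inside a cubic field $K$ where $\S_k$ splits, or work directly in $\Z[x]/(\S_k(x))$, and check that "$\overline{t} \mid \overline{F}$ in $\F_p[x]$" is genuinely equivalent to "$\S_k(\widehat\rho^{\,p}) \equiv 0 \pmod{p^2}$" independent of the choice of lift. The cleanest route is to stay entirely in $\F_p[x]$: $\overline{t}(x) \mid \overline{F}(x)$ iff $\overline{F}(x) \equiv 0$ in $\F_p[x]/(\overline{t}(x)) \cong \F_{p^3}$, and since $p F(x) = \S_k(x)^p - \S_k(x^p)$ in $\Z[x]$, reducing modulo $p^2$ and modulo the ideal generated by (a lift of) $\S_k(x)$ shows $pF(x) \equiv -\S_k(x^p) \pmod{(p^2, \S_k(x))}$; then $\overline{F}(\rho) = 0$ in $\F_{p^3}$ is the same as $\S_k(\rho^p) \equiv 0 \pmod{p^2}$ read in $\Z[x]/(p^2,\S_k(x))$, which is what the right-hand side of the lemma means. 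The only genuinely delicate point is to make sure the $p$-th power term contributes nothing mod $p^2$, which is immediate from $\S_k(\rho) \equiv 0 \pmod p$ and $p \geq 2$; everything else is bookkeeping in Dedekind's Criterion.
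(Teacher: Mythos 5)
Your proposal is correct and follows essentially the same route as the paper: apply Dedekind's criterion to $T(x)=\S_k(x^p)$ at $q=p$ with $g(x)=\S_k(x)$ and $h(x)=\S_k(x)^{p-1}$, note $pF(x)=\S_k(x)^p-\S_k(x^p)$, and use $\S_k(\rho)^p\equiv 0\pmod{p^2}$ to get $pF(\rho)\equiv-\S_k(\rho^p)\pmod{p^2}$, so that $\overline{\S_k}\mid\overline{F}$ iff $\S_k(\rho^p)\equiv 0\pmod{p^2}$. Your extra care about choosing lifts and working in $\Z[x]/(p^2,\S_k(x))$ is a welcome precision that the paper leaves implicit, but it does not change the argument.
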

\begin{proof}
We apply Theorem \ref{Thm:Dedekind} to $T(x):=\S_k(x^p)$ using the prime $p$.
 Since $\S_k(x)$ is irreducible in $\F_p[x]$, we have that $\overline{\S_k}(x^p)=(\S_k(x))^p$, and
 we can let
\[g(x)=\S_k(x) \quad \mbox{and}\quad h(x)=\S_k(x)^{p-1}.\]
 Then
     \[pF(x)=g(x)h(x)-T(x)=\S_k(x)^{p}-\S_k(x^p),\] so that
     \begin{equation}\label{Eq:pF}
     pF(\rho)\equiv \S_k(\rho)^p-\S_k(\rho^p)\equiv  -\S_k(\rho^p) \pmod{p^2}.
     \end{equation}
     Since $\S_k(x)$ is irreducible in $\F_p[x]$, we conclude that $\gcd(\overline{\S_k},\overline{F})\in \{1,\overline{\S_k}\}$ in $\F_p[x]$. Hence, it follows from \eqref{Eq:pF} that
     \begin{align*}
      [\Z_{L}:\Z[\theta]]\equiv 0\pmod{p}&\ \Longleftrightarrow \ \gcd(\overline{\S_k},\overline{F})=\overline{\S_k}\\
       &\ \Longleftrightarrow \ F(\rho)\equiv 0 \pmod{p}\\
       &\ \Longleftrightarrow \ \S_k(\rho^p)\equiv 0 \pmod{p^2},
     \end{align*}
          which completes the proof.
    \end{proof}

\begin{lemma}\label{Lem:Main3}
  If $\S_k(x)$ is irreducible in $\F_p[x]$, then
  \[\mbox{$p$ is a $k$-Shanks prime}\ \Longleftrightarrow \ \S_k(\rho^p) \equiv 0\pmod{p^2}.\]
   \end{lemma}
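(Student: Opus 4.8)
The goal is to characterize $k$-Shanks primes via the congruence $\S_k(\rho^p)\equiv 0\pmod{p^2}$, and the natural strategy is to connect the period $\pi(p^2)$ to the multiplicative order of $\rho$ modulo $p^2$, then use Lemma~\ref{Lem:Main1}. By Lemma~\ref{Lem:Main1}\eqref{Per:I1}, $\ord_{p^2}(\rho)=\pi(p^2)$ and $\ord_p(\rho)=\pi(p)$; and by \eqref{Per:I3} we have $\pi(p^2)\in\{\pi(p),p\pi(p)\}$. So $p$ is a $k$-Shanks prime precisely when $\ord_{p^2}(\rho)=\ord_p(\rho)$, which (since $\ord_{p^2}(\rho)$ is a multiple of $\ord_p(\rho)$) is equivalent to $\rho^{\pi(p)}\equiv 1\pmod{p^2}$. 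The first step is therefore to make sense of $\rho^{p^2}$ in $\Z_{p^2}[\rho]$ (i.e., reducing mod $p^2$ in the ring $\Z[x]/(\S_k(x),p^2)$, which is well-defined since $\S_k$ is irreducible hence separable mod $p$, so Hensel/flatness gives a clean lift), and to relate $\S_k(\rho^p)$ to the discrepancy between $\rho^p$ and its ``expected'' Frobenius value.

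\textbf{Key steps.}
First I would observe that, working modulo $p$, $\rho^p=\sigma$ is again a root of $\S_k$, so $\S_k(\rho^p)\equiv 0\pmod p$ automatically; the content is the refinement mod $p^2$. Write $\rho^p=\sigma+p\beta$ for some $\beta$ in the ring $R:=\Z[x]/(\S_k(x))$ reduced mod $p^2$. Then a first-order Taylor expansion gives
\[
\S_k(\rho^p)\equiv \S_k(\sigma)+p\beta\,\S_k'(\sigma)\pmod{p^2},
\]
and since $\S_k(\sigma)\equiv 0\pmod p$ one gets $\S_k(\rho^p)\equiv p\bigl(\tfrac{\S_k(\sigma)}{p}+\beta\,\S_k'(\sigma)\bigr)\pmod{p^2}$ — but this needs $\S_k(\sigma)$ computed in $\Z[\rho]$, not mod $p$, so care is required. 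The cleaner route is: $\rho^{p^3}\equiv\rho\pmod p$, so $\rho^{p^3}=\rho+p\delta$ for some $\delta$, and $\S_k(\rho^p)\equiv 0\pmod{p^2}$ should be shown equivalent to $\rho^{p^3}\equiv\rho\pmod{p^2}$ (apply Frobenius-type iteration: if $\rho^p=\sigma+p\beta$ then raising to the $p$-th power mod $p^2$ gives $\rho^{p^2}\equiv\sigma^p+p\beta^p\sigma^{p-1}\equiv\tau+p(\cdots)$, etc.). Then, since $\ord_p(\rho)\mid p^3-1$ is coprime to $p$, the condition $\rho^{p^3}\equiv\rho\pmod{p^2}$, i.e.\ $\rho^{p^3-1}\equiv 1\pmod{p^2}$, combined with $\rho^{\pi(p)}\equiv 1\pmod p$ and $\pi(p)\mid p^3-1$ (from \eqref{Per:I2}, $\pi(p)\mid p^2+p+1\mid p^3-1$), forces $\ord_{p^2}(\rho)\mid p^3-1$; since $\ord_{p^2}(\rho)\in\{\pi(p),p\pi(p)\}$ and $p\nmid p^3-1$, this collapses to $\ord_{p^2}(\rho)=\pi(p)$. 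Conversely, if $p$ is a $k$-Shanks prime then $\ord_{p^2}(\rho)=\pi(p)\mid p^3-1$, so $\rho^{p^3-1}\equiv 1\pmod{p^2}$ and the chain runs backward to $\S_k(\rho^p)\equiv 0\pmod{p^2}$.

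\textbf{Main obstacle.}
The delicate point is the exact equivalence
\[
\S_k(\rho^p)\equiv 0\pmod{p^2}\ \Longleftrightarrow\ \rho^{p^{3}}\equiv \rho\pmod{p^{2}},
\]
which is the ``lifting the exponent'' / Frobenius-iteration mechanism: one must track the correction terms through three applications of the $p$-th power map in the ring $R/p^2R$ and verify that no information is lost — in particular that $\S_k'(\rho)$, $\S_k'(\sigma)$, $\S_k'(\tau)$ are units mod $p$ (which holds because $\S_k$ is separable over $\F_p$, as $\Delta(\S_k)=(k^2+3k+9)^2$ and $p\nmid k^2+3k+9$ under the hypothesis that $\S_k$ is irreducible mod $p$, since a prime dividing $k^2+3k+9$ makes $\S_k$ split—cf.\ Corollary~\ref{Cor:Shanks1} and Proposition~\ref{Prop:discrim}). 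Granting that the derivative is invertible, the three Taylor expansions telescope and the equivalence follows; everything else is then bookkeeping with orders modulo $p$ and $p^2$ via Lemma~\ref{Lem:Main1}.
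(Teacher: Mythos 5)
Your overall strategy is sound and lands close to the paper's: both arguments rest on Lemma \ref{Lem:Main1} together with the observation that $\rho$, $\sigma$, $\tau$ are the only zeros of $\S_k(x)$ in $\Z[\rho]/p^2\Z[\rho]$ (Hensel, since $p\nmid \Delta(\S_k)$ when $\S_k(x)$ is irreducible in $\F_p[x]$), so that $\S_k(\rho^p)\equiv 0\pmod{p^2}$ says exactly that $\rho^p\equiv \sigma$ or $\tau \pmod{p^2}$. Where you diverge is in the pivot: you route everything through the claimed equivalence $\S_k(\rho^p)\equiv 0\pmod{p^2}\Leftrightarrow \rho^{p^3}\equiv\rho\pmod{p^2}$ and then compare $\ord_{p^2}(\rho)$ with $p^3-1$, whereas the paper works with $p^2+p+1$ and exploits $\rho\sigma\tau=1$: from $\rho^p\equiv\sigma\pmod{p^2}$ and a single application of the Galois automorphism (giving $\rho^{p^2}\equiv\sigma^p\equiv\tau\pmod{p^2}$) it obtains $\rho^{p^2+p+1}\equiv\tau\sigma\rho\equiv 1\pmod{p^2}$ and finishes with parts \ref{Per:I2} and \ref{Per:I3} of Lemma \ref{Lem:Main1}; in the other direction it deduces $\rho^p\pmod{p^2}\in\{\sigma,\tau\}$ from $\pi(p^2)=\pi(p)$ and simply evaluates $\S_k(\sigma)$. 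Your version is workable but more roundabout, since $p^3-1=(p-1)(p^2+p+1)$ carries no additional information.

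The genuine gap is that your pivotal equivalence is asserted rather than proved, and the missing ingredient is exactly the step your sketch skips: to iterate Frobenius you must identify $\sigma^p\pmod{p^2}$, and the only way to get that from $\rho^p\equiv\sigma\pmod{p^2}$ is to apply the Galois automorphism $g:\rho\mapsto\sigma\mapsto\tau\mapsto\rho$, which stabilizes $\Z[\rho]$ (Proposition \ref{Prop:Shanks1} writes $\sigma$ and $\tau$ as integer polynomials in $\rho$) and hence acts on $\Z[\rho]/p^2\Z[\rho]$. Writing $\rho^p=\sigma(1+p\epsilon)$, one finds $\rho^{p^3}\equiv\rho\bigl(1+p\,g^2(\epsilon)\bigr)\pmod{p^2}$, and the direction you need for the $k$-Shanks implication ($\rho^{p^3}\equiv\rho\pmod{p^2}\Rightarrow\S_k(\rho^p)\equiv 0\pmod{p^2}$) requires the injectivity of $g^2$ on $\F_{p^3}$ to conclude $\epsilon\equiv 0\pmod{p}$; ``the chain runs backward'' does not come for free. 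Relatedly, your expansion $(\sigma+p\beta)^p\equiv\sigma^p+p\beta^p\sigma^{p-1}$ is incorrect: the cross term is $p\cdot p\beta\sigma^{p-1}\equiv 0\pmod{p^2}$ and $(p\beta)^p$ is divisible by $p^p$, so in fact $(\sigma+p\beta)^p\equiv\sigma^p\pmod{p^2}$. That simplification is what makes the telescoping work, but it also means the information about $\beta$ survives only through the Galois conjugation, not through the binomial expansion. Once these two points are supplied, your argument closes.
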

\begin{proof} Note that the zeros of $\S_k(x)$ in $(\Z/p\Z)[\rho]$ and $(\Z/p^2\Z)[\rho]$ are precisely $\rho$, $\sigma$ and $\tau$, as described in Proposition \ref{Prop:Shanks1}.

  Assume first that $p$ is a $k$-Shanks prime.
      Then $\pi(p^2)=\pi(p)$ from Definition \ref{Def:kS prime}, and we conclude from the proof of Lemma \ref{Lem:Main1} that $\rho^p \pmod{p^2}\in \{\sigma,\tau\}$. Without loss of generality, suppose that $\rho^p\equiv \sigma \pmod{p^2}$.
        Then 
       \[\S_k(\rho^p)=\rho^{3p}-k\rho^{2p}-(k+3)\rho^p-1\equiv \S_k(\sigma)\equiv 0 \pmod{p^2}.\]

       Conversely, assume that $\S_k(\rho^p)\equiv 0 \pmod{p^2}$.
       Hence, $\rho^p\pmod{p^2}\in \{\rho,\sigma,\tau\}$. 
Then, by  Lemma \ref{Lem:Main1}, if $\rho^p\equiv \rho \pmod{p^2}$, we have
\[1\equiv \rho^{p^2+p+1}\equiv (\rho^p)^p\rho^p\rho\equiv \rho^3 \pmod{p},\]
 so that $\pi(p)=3$. Thus,
 \[U_3=k\equiv 0 \pmod{p} \quad \mbox{and} \quad U_4=k^2+k+3 \equiv 0 \pmod{p},\]
 which implies that $p=3$ and $\S_k(x)\equiv (x-1)^3 \pmod{3}$, contradicting the fact that $\S_k(x)$ is irreducible in $\F_p[x]$. Therefore, without loss of generality, suppose that
 \[\rho^p\equiv \sigma \pmod{p^2}\quad \mbox{and} \quad \rho^{p^2}\equiv \tau \pmod{p^2}.\] Then
 \[\rho^{p^2+p+1}\equiv \rho^{p^2}\rho^p\rho\equiv \tau\sigma\rho \equiv 1 \pmod{p^2},\] and hence, $p^2+p+1\equiv 0 \pmod{\pi(p^2)}$. By Lemma \ref{Lem:Main1}, $\pi(p^2)\in \{\pi(p),p\pi(p)\}$. However, since $p^2+p+1\not \equiv 0 \pmod{p}$, it follows that $\pi(p^2)=\pi(p)$ and $p$ is a $k$-Shanks prime by Definition \ref{Def:kS prime}.
\end{proof}
Combining Lemma \ref{Lem:Main2} and Lemma \ref{Lem:Main3} yields
\begin{lemma}\label{Lem:Main4}
Let $\Z_L$ be the ring of integers of $L=\Q(\theta)$, where $\S_k(\theta^p)=0$.
  If $\S_k(x)$ is irreducible in $\F_p[x]$, then
\[\mbox{$p$ is a $k$-Shanks prime}\ \Longleftrightarrow \ [\Z_{L}:\Z[\theta]]\equiv 0\pmod{p}.\]
\end{lemma}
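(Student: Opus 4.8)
The statement to be proved, Lemma \ref{Lem:Main4}, is simply the conjunction of the two equivalences already established in Lemma \ref{Lem:Main2} and Lemma \ref{Lem:Main3}, so the plan is entirely mechanical: chain the biconditionals together. More precisely, the hypothesis that $\S_k(x)$ is irreducible in $\F_p[x]$ is exactly the common hypothesis of both lemmas, and with $\rho$ a root of $\S_k(x)$ in $\F_{p^3}$ (which exists and is well-defined up to conjugacy precisely because of that irreducibility), both invoked results apply verbatim.

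The key steps, in order, are as follows. First I would invoke Lemma \ref{Lem:Main3} to replace the condition ``$p$ is a $k$-Shanks prime'' by the congruence condition $\S_k(\rho^p)\equiv 0\pmod{p^2}$. Then I would invoke Lemma \ref{Lem:Main2} to replace that congruence condition in turn by the divisibility condition $[\Z_L:\Z[\theta]]\equiv 0\pmod{p}$. Composing the two equivalences gives
\[
\mbox{$p$ is a $k$-Shanks prime}\ \Longleftrightarrow\ \S_k(\rho^p)\equiv 0\pmod{p^2}\ \Longleftrightarrow\ [\Z_L:\Z[\theta]]\equiv 0\pmod{p},
\]
which is the assertion of the lemma. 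One small bookkeeping point worth stating explicitly is that the two lemmas are phrased with the same choice of $\rho$ (a zero of $\S_k$ in $\F_{p^3}$, equivalently in $(\Z/p\Z)[\rho]$), so there is no compatibility issue in splicing them; in Lemma \ref{Lem:Main2} the field $L=\Q(\theta)$ with $\S_k(\theta^p)=0$ and its ring of integers $\Z_L$ are introduced with exactly the notation used in the present statement.

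There is essentially no obstacle here — the real content lies upstream. The hard part is already behind us: the Dedekind-criterion computation of Lemma \ref{Lem:Main2}, where the factorization $\overline{\S_k}(x^p)=(\S_k(x))^p$ over $\F_p$ lets one take $g=\S_k$, $h=\S_k^{p-1}$, and reduces the index condition to $\S_k(\rho^p)\equiv 0\pmod{p^2}$; and the period-length analysis of Lemma \ref{Lem:Main3}, where the Shanks relations $\rho\sigma\tau=1$ (from $\S_k(0)=-1$) together with Lemma \ref{Lem:Main1}(\ref{Per:I2})--(\ref{Per:I3}) and the identity $\{\rho,\sigma,\tau\}=\{\rho,\rho^p,\rho^{p^2}\}$ translate the condition $\pi(p^2)=\pi(p)$ into the same congruence, ruling out the degenerate case $\pi(p)=3$ via irreducibility of $\S_k$ in $\F_p[x]$. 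Given both of those, the proof of Lemma \ref{Lem:Main4} is a one-line transitivity argument.
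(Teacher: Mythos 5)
Your proposal is correct and matches the paper exactly: the paper states Lemma \ref{Lem:Main4} as the immediate consequence of combining Lemma \ref{Lem:Main2} and Lemma \ref{Lem:Main3}, which is precisely the transitivity-of-biconditionals argument you give. No gaps or differences to report.
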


We are now in a position to present the proof of Theorem \ref{Thm:Main}.
\begin{proof}[Proof of Theorem \ref{Thm:Main}]
Recall that $\S_k(x)$ is irreducible over $\Q$, and that $\Delta(\S_k)=(k^2+3k+9)^2$. Let $\Z_K$ be the ring of integers of $K=\Q(\rho)$, where $\S_k(\rho)=0$.

To establish item \ref{Main:I1}, we use \eqref{Eq:Delta(S)} and Theorem \ref{Thm:Dedekind} with $T(x):=\S_k(x)$ to show that $[\Z_K:\Z[\rho]]\not \equiv 0 \pmod{q}$, where $q$ a prime divisor of $k^2+3k+9$. Note that $q\ne 2$.

Suppose first that $q=3$. Then $3\mid k$ and $\overline{T}(x)=(x-1)^3$, so that we may let $g(x)=x-1$ and $h(x)=(x-1)^2$. Then
\begin{align*}
  3F(x)&=g(x)h(x)-T(x)\\ 
  &=(x-1)^3-(x^3-kx-(k+3)x-1)\\
  &=(k-3)x^2+(k+6)x,
  \end{align*}
  and $3F(1)=2k+3$.
  Therefore, if $3F(1)\equiv 0 \pmod{9}$, then $k\equiv 3 \pmod{9}$, which is a contradiction. Hence, $\gcd(\overline{g},\overline{F})=1$ in $\F_3[x]$, and $[\Z_K:\Z[\rho]]\not \equiv 0 \pmod{3}$.

  Suppose next that $q>3$. Note that $q\nmid k(k+3)$. Let $\alpha_1:=k/3$.
   Then
   \[\S_k(\alpha_1)=-\frac{(2k+3)(k^2+3k+9)}{27}\equiv 0 \pmod{q},\]
and, from Proposition \ref{Prop:Shanks1}, we have that the other zeros of $\S_k(x)$ modulo $q$ are
\begin{equation}\label{Eq:a3}
\alpha_2=-\frac{1}{\alpha+1}=\frac{3}{k+3} \quad \mbox{and}\quad \alpha_3=-\frac{\alpha+1}{\alpha}=-\frac{k+3}{k}.
\end{equation}
Since
\begin{align*}
  \alpha_1-\alpha_2&=\frac{k^2+3k+9}{3(k+3)}\equiv 0 \pmod{q} \mbox{ and}\\
  \alpha_1-\alpha_3&=-\frac{k^2+3k+9}{3k}\equiv 0 \pmod{q},
\end{align*}
we conclude that
\begin{equation}\label{Eq:Tbar1}
\overline{T}(x)=(x-\alpha_i)^3 \quad \mbox{for any $i\in \{1,2,3\}$}. 
\end{equation}
   Since $k^2+3k+9\equiv 0 \pmod{q}$, then $(k+3/2)^2\equiv  -3(3/2)^2\pmod{q}$,
   which implies that $-3$ is a square modulo $q$. Thus,  $q\equiv 1 \pmod{3}$ by quadratic reciprocity. Observe then that
  \[\frac{k-jq}{3}\in \Z \quad \mbox{if} \quad k\equiv j \pmod{3},\quad \mbox{where}\quad j\in \{-1,0,1\}.\] Hence, from \eqref{Eq:Tbar1}, we can let
  \begin{equation}\label{Eq:g and h}
  g(x)=x-\frac{k-jq}{3} \quad \mbox{and} \quad h(x)=\left(x-\frac{k-jq}{3}\right)^2,
  \end{equation}
  if $k\equiv j \pmod{3}$, where $j\in \{-1,0,1\}$.
  We claim that
  \[qF\left(\frac{k-jq}{3}\right)\not \equiv 0 \pmod{q^2}.\] Since the details for each case are similar, we provide details only for the case $j=1$. Then,
   \[qF(x)=g(x)h(x)-T(x)=\left(x-\frac{k-q}{3}\right)^3-(x^3-kx^2-(k+1)x-1),\]
  so that
  \[qF\left(\frac{k-q}{3}\right)=\frac{(k^2+3k+9)(2k-3q+3)}{27}+\frac{q^3}{27}.\] Therefore, since $k^2+3k+9$ is squarefree,
  if $qF\left(\frac{k-q}{3}\right)\equiv 0 \pmod{q^2}$, we must have that $2k+3\equiv 0 \pmod{q}$, which implies that $k\equiv -3/2 \pmod{q}$. However, since $k^2+3k+9\equiv 0 \pmod{q}$, it follows that
  \[(-3/2)^2+3(-3/2)+9=27/4 \equiv 0 \pmod{q},\]
  which contradicts the fact that $q>3$. Thus, $qF\left(\frac{k-q}{3}\right)\not \equiv 0 \pmod{q^2}$. Consequently, $\gcd(\overline{g},\overline{F})=1$ in $\F_q[x]$ and hence, $[\Z_K:\Z[\rho]]\not \equiv 0 \pmod{q}$, from which we conclude that $\S_k(x)$ is monogenic, completing the proof of item \ref{Main:I1}.

We turn now to items \ref{Main:I2} and \ref{Main:I3}. Recall that $\S_k(x^p)$ is irreducible over $\Q$,  and that   \[\Delta(\S_k(x^p))=(-1)^{(p-2)(p-1)/2}p^{3p}(k^2+2k+9)^{2p}\] from, respectively, Lemma \ref{Lem:Main0} and Proposition \ref{Prop:discrim}. Let $\Z_L$ be the ring of integers of $L=\Q(\theta)$, where $\S_k(\theta^p)=0$. Since $\S_k(x)$ is monogenic, we have from \eqref{Eq:Dis-Dis} that $\Delta(K)=(k^2+3k+9)^2$. Thus,
  \[\Delta(L)\equiv 0 \pmod{(k^2+3k+9)^{2p}},\]
  by Theorem \ref{Thm:CD}.  Hence, we have shown that the monogenicity of $\S_k(x^p)$ is completely determined by the prime $p$. More explicitly, we have that
  \begin{equation}\label{Eq:Equiv Condition}
  \S_k(x^p) \mbox{ is monogenic}\ \Longleftrightarrow \ [\Z_L:\Z[\theta]]\not \equiv 0 \pmod{p}.
    \end{equation} Therefore, item \ref{Main:I2} is an immediate consequence of \eqref{Eq:Equiv Condition} and Lemma \ref{Lem:Main4}.

   To establish item \ref{Main:I3}, we use  Theorem \ref{Thm:Dedekind} with \[T(x):=\S_k(x^p)\quad  \mbox{and} \quad q:=p.\]
   Note that $p\ne 2$ since $k^2+3k+9\equiv 1 \pmod{2}$. The case $p=3$ can be handled in a manner similar to the situation $q=3$ for the monogenicity of $\S_k(x)$, and so we omit the details.

Suppose then that $p>3$. Note that $p\nmid k$ since $k^2+3k+9\equiv 0 \pmod{p}$. From \eqref{Eq:a3} and \eqref{Eq:Tbar1}, we deduce that
\[\S_k(x^p)\equiv \left(x-\alpha\right)^{3p} \pmod{p},\]
where $\alpha=-(k+3)/k$.

We claim that $\S_k(\alpha^p) \not \equiv 0 \pmod{p^2}$. By way of contradiction, assume that
\begin{equation}\label{Eq:Spalpha=0}
\S_k(\alpha^p)=\alpha^{3p}-k\alpha^{2p}-(k+3)\alpha^p-1\equiv 0 \pmod{p^2}.
\end{equation} Since
\[\alpha^3-1=-\frac{(2k+3)(k^2+3k+9)}{k^3}\equiv 0 \pmod{p},\] it follows that
$\alpha^{3p}\equiv 1 \pmod{p^2}$. Thus, we deduce from  \eqref{Eq:Spalpha=0} and \eqref{Eq:a3} that
\begin{equation}\label{Eq:Important Congruence}
\alpha^p\equiv -\frac{k+3}{k}\equiv \alpha \pmod{p^2}.
\end{equation} Applying \eqref{Eq:Important Congruence} to \eqref{Eq:Spalpha=0}, we conclude that
\[\alpha^{3}-k\alpha^{2}-(k+3)\alpha-1\equiv -\frac{(2k+3)(k^2+3k+9)}{k^3}\equiv 0 \pmod{p^2},\]
which is impossible since $2k+3\not \equiv 0 \pmod{p}$ and $\D$ is squarefree. Hence, the claim is established. 
Thus, since $\S_k^{\prime}(\alpha^p)\equiv 0 \pmod{p}$, it follows from Hensel that $\S_k(x^p)$ has no zeros modulo $p^2$.

Therefore, in particular, since we can let
\[g(x)=x+\frac{k+3+jp}{k} \quad \mbox{and} \quad h(x)=\left(x+\frac{k+3+jp}{k}\right)^{3p-1},\]
where $j\in \Z$ is such that $jp\equiv -3 \pmod{k}$, it follows from the claim that
\[pF(\alpha)=g(\alpha)h(\alpha)-\S_k(\alpha^p)=\left(\frac{jp}{k}\right)^{3p}-\S_k(\alpha^p)\equiv -\S_k(\alpha^p) \not \equiv 0 \pmod{p^2}.\]
Consequently, $[\Z_L:\Z[\theta]]\not \equiv 0 \pmod{p}$, which implies that $\S_k(x^p)$ is monogenic by \eqref{Eq:Equiv Condition}, and completes the proof of the theorem.
\end{proof}

Using a computer algebra system, it is easy to verify that $k\not \equiv 3 \pmod{9}$, $\D$ is squarefree, $\S_k(x)$ is irreducible in $\F_p[x]$  and $\S_k(x^p)$ is non-monogenic for each of the examples in Table \ref{T:1}.



\end{document}